 \def\RR{{\mathbb R}}  \def\TT{{\mathbb T}}
 \def\ZZ{{\mathbb Z}}
\def\cF{\mathcal{F}}
\newtheorem*{teo*}{Theorem}
\newtheorem*{prop*}{Proposition}
\newtheorem*{cor*}{Corollary}
\newtheorem*{goal*}{Goal}
\newtheorem*{teoA'}{Theorem A'}
\newtheorem{teo}{Theorem}[section]
\newtheorem{thm}[teo]{Theorem}
\newtheorem{quest}[teo]{Question}
\newtheorem{cor}[teo]{Corollary}
\newtheorem{prop}[teo]{Proposition}
\newcommand{\bi}{\begin{itemize}}
\newcommand{\ei}{\end{itemize}}
\theoremstyle{definition}
\theoremstyle{remark}
\newtheorem{remark}[teo]{Remark}
\numberwithin{equation}{section}
\newcounter{notes}%
\author[R. Potrie]{Rafael Potrie}
\address{CMAT, Facultad de Ciencias, Universidad de la Rep\'ublica, Uruguay}
\curraddr{IAS, Princeton, NJ, USA}
\urladdr{www.cmat.edu.uy/$\sim$rpotrie}
\email{rpotrie@cmat.edu.uy}
\title[Partially hyperbolic dynamics]
{Partially hyperbolic dynamics and 3-manifold topology}
\thanks{The author was partially supported by CSIC-618, FCE-135352 the Minerva Research Foundation Membership Fund and the grant NSF DMS-1638352. This work was written while the author was a Von Neumann fellow at the Institute for Advanced Study and wants to acknowledge the excellent working conditions and environment. Comments of Silvia Ghinassi, Mariana Haim, Santiago Martinchich, Mart\'in Reiris and Jan Vonk were very helpful in the writing of the note. The autor wishes to thanks particularly the referees who provided a lot of helpful input to improve the paper.}
\begin{document}

\maketitle
%
%
\medskip

\section{Introduction}

The hairy ball theorem implies that the two dimensional sphere cannot admit a vector field without singularities. This is just an example of a restriction imposed by the topology of a phase space to the possible dynamics it can support.

In this note we would like to present and relate two results in this direction. These are about restrictions imposed by the topology of certain 3-manifolds on the dynamics it can support. 

The first result we will present corresponds to the theory of Anosov flows and was proved by Margulis \cite{Margulis} when he was still an undergraduate student in an appendix to a paper of Anosov and Sinai \cite{AnosovSinai}. The result was later revisited by Plante and Thurston \cite{PlanteThurston} who extended its scope and proposed a different approach that used some finer properties of foliations. 

A flow $\phi_t : M \to M$ generated by a (smooth) vector field $X$ on a closed manifold $M$ is said to be an \emph{Anosov flow} if there is a continuous $D\phi_t$-invariant splitting of the tangent bundle $TM = E^s \oplus \RR X \oplus E^u$ satisfying that there is $t_0>0$ so that for every $v^\sigma \in E^{\sigma}$ ($\sigma = s,u$) a unit vector we have that $\|D\phi_{t_0} v^s\|< 1 < \|D\phi_{t_0} v^u\|$. This immediately implies that stable vectors (i.e. those in $E^s$) are contracted exponentially by $D\phi_t$ while unstable vectors (i.e. those in $E^u$) are expanded exponentially fast by $D\phi_t$.

\begin{figure}[ht]
\begin{center}
\includegraphics[scale=0.35]{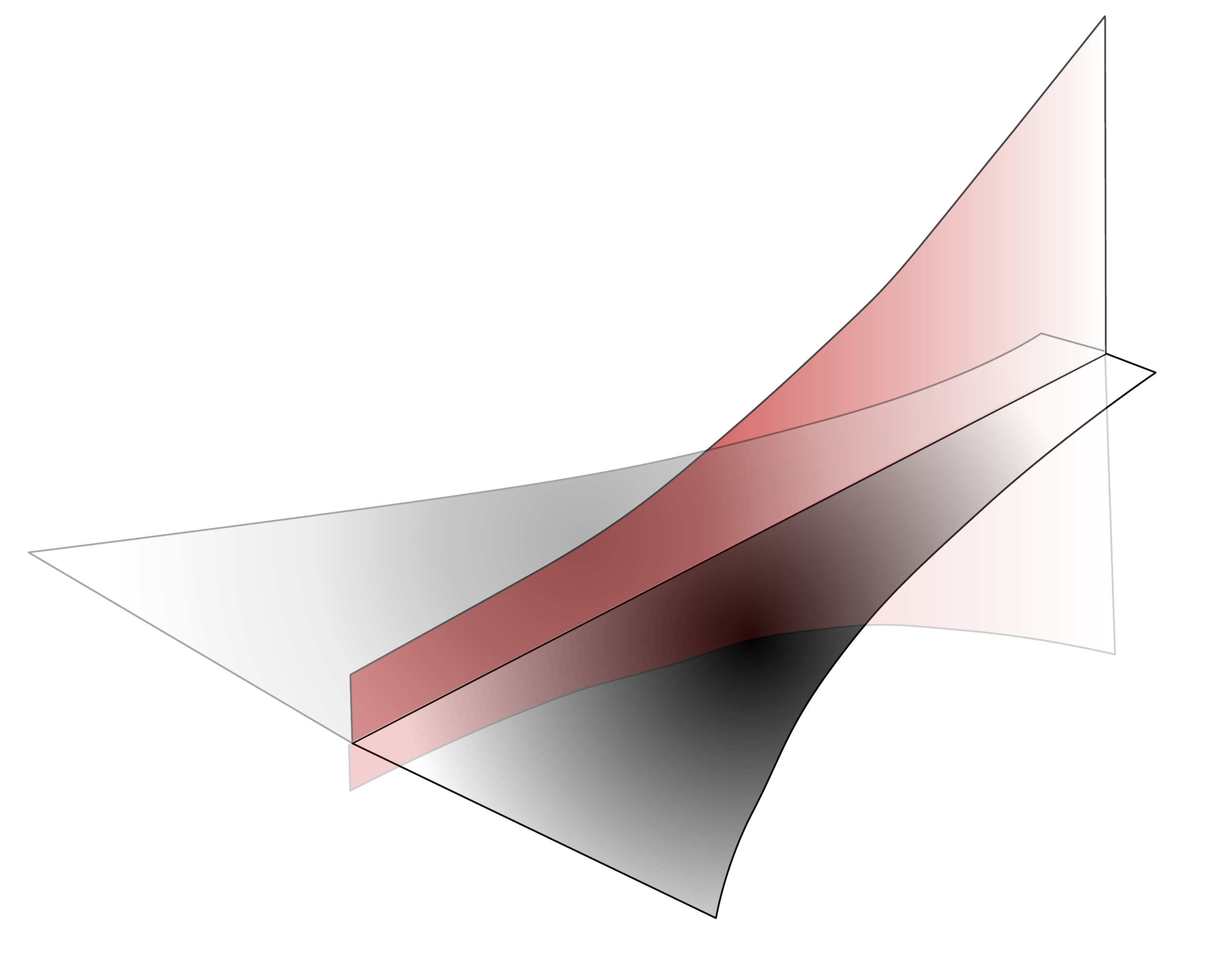}
\begin{picture}(0,0)
\end{picture}
\end{center}
\vspace{-0.5cm}
\caption{{\small The local aspect of orbits of Anosov flow.}}\label{fig.aflow}
\end{figure}

Examples of Anosov flows include geodesic flows in negative curvature \cite{Anosov} as well as suspensions of certain toral automorphisms. Their definition goes back at least to the paper of Anosov and Sinai \cite{AnosovSinai} where they extracted the properties from geodesic flows in negative curvature needed to obtain ergodicity. We point out that in 3-manifolds we know that Anosov flows contain the space of robustly transitive flows (i.e. flows so that every perturbation has some dense orbit), see \cite{Doering,BDV}. The result by Margulis and Plante-Thurston says that if a 3-manifold admits an Anosov flow then its fundamental group has exponential growth (see Theorem \ref{t.main}) and implies in particular that 3-manifolds such as the sphere $S^3$ or the three torus $\mathbb{T}^3= \mathbb{R}^3/_{\mathbb{Z}^3}$ do not admit such flows.

The second result is more recent and essentially due to Burago and Ivanov \cite{BI}. The result gives some obstructions for some mapping classes of certain 3-manifolds to admit partially hyperbolic diffeomorphisms. A diffeomorphism $f: M \to M$ is said to be \emph{partially hyperbolic} if the tangent space $TM$ splits as a direct sum of non-trivial continuous subbundles $E^s \oplus E^c \oplus E^u = TM$ which are $Df$-invariant and satisfy that there is some $\ell>1$ so that for every $x\in M$, if $v^\sigma \in E^\sigma(x)$ ($\sigma = s,c,u$) are unit vectors, then: 

\begin{equation}\label{eq:PH}
 \|Df^\ell v^s \| < \min \{ 1, \|Df^\ell v^c \|\} \quad \text{  and  }  \quad
  \|Df^\ell v^u \| > \max \{ 1, \|Df^\ell v^c \| \}. 
\end{equation}

Naturally, time one maps of Anosov flows (i.e. the diffeomorphism $f(x) = \phi_1(x)$ where $\phi_t$ is an Anosov flow) are examples where $E^c=\RR X$. Many homogeneous dynamics (namely, those which have positive entropy) are partially hyperbolic. Some of them are not time one maps of Anosov flows; for instance, the action of a matrix $A \in \mathrm{SL}(3,\ZZ)$ in the 3-torus $\mathbb{T}^3$ with three different real eigenvalues is such an example. Being partially hyperbolic is an open property in the $C^1$-topology, so it is possible to make $C^1$-small perturbations to the mentioned examples to obtain new examples. 

In dimension 3, the result of Burago-Ivanov we will explain implies that if a diffeomorphism $f: M \to M$ is partially hyperbolic and the manifold does not have fundamental group of exponential growth, then $f$ cannot be homotopic to the identity. See Theorem \ref{t.obstructionph} for a precise statement. 

The connection between these two results will allow us to briefly comment on the classification of partially hyperbolic diffeomorphisms in 3-manifolds, referring the interested reader to recent surveys such as  \cite{CHHU, HP, PotrieICM, BFFP, BFP} for a more complete presentation. 

\section{Anosov flows and foliations}\label{s.AFlows}

Consider an Anosov flow $\phi_t: M \to M$ in a closed manifold $M$. The definition requires that the differential of the flow preserves some geometric structure. It could seem hard to check that a flow is Anosov, but it is important to remark that the existence of the $D\phi_t$ invariant bundles follows from the existence of a way more flexible structure, namely that of \emph{invariant cone-fields} which we will not define but just point out that these are objects that are robust (i.e. if a system has invariant cone-fields then this is true in a $C^1$-open neighborhood) and somewhat easy to check (i.e. a computer can check whether a system is Anosov). 

The importance of this infinitesimal condition is that it can be pushed into the manifold in a way that one obtains objects whose dynamics mimic the dynamics of the differential map. 

\begin{thm}[Stable manifold theorem]\label{thm-stablemfd}
The bundles $E^s$ and $E^s \oplus \RR X$ are uniquely integrable. 
\end{thm}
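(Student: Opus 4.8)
The plan is to deduce this from Hadamard--Perron invariant manifold theory applied to the time-$t_0$ map $f=\phi_{t_0}$ along orbits. First I would fix $t_0>0$ as in the definition, so that $Df$ uniformly contracts $E^s$ and uniformly expands $E^u$, while on the line field $\RR X$ one has $Df\,X(x)=X(\phi_{t_0}(x))$, hence uniformly bounded growth there because $\|X\|$ is bounded above and below on the compact manifold $M$ (an Anosov flow has no singularities, since $\RR X$ is a summand of $TM$). Given $x\in M$ put $x_n=\phi_{nt_0}(x)$, choose $C^1$ charts $\psi_n$ sending $x_n\mapsto 0$ and straightening the splitting, and write $f$ in these charts as a sequence $f_n=\psi_{n+1}\circ f\circ\psi_n^{-1}$; after shrinking the charts (uniformly, using compactness of $M$ and continuity of $E^s,\RR X,E^u$) each $f_n$ is a small $C^1$ perturbation of the linear map $A_n=Df_n(0)$, which preserves the three coordinate subspaces with the contracting/neutral/expanding behaviour above.

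Next I would build the local strong stable manifold $\Ws_{loc}(x)$ as the graph of a map from a small ball in $E^s_x$ into $(\RR X\oplus E^u)_x$, realized as the unique fixed point of the graph transform $\Gamma$ attached to the sequence $(f_n)$. On the complete metric space of uniformly Lipschitz sections with small Lipschitz constant, $\Gamma$ is a contraction (the $E^s$-block pushes graphs forward and contracts, the neutral/expanding blocks are pulled back), so there is a unique invariant Lipschitz graph, which consists exactly of the points whose forward $f$-orbit stays in the chosen charts and which therefore converge to $x$ under $\phi_t$; it depends continuously on $x$. A fiber-contraction (bootstrap) argument run on $C^1$ sections upgrades this to a $C^1$ embedded disc (and to $C^r$ if the flow is $C^r$), and since vectors tangent to $\Ws_{loc}(x)$ are forward-contracted they must lie in $E^s$, so by dimension count $T_y\Ws_{loc}(x)=E^s_y$ for every $y$. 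Globalizing by $\Ws(x)=\bigcup_{n\ge0}\phi_{-nt_0}\big(\Ws_{loc}(x_n)\big)$ yields an injectively immersed $C^1$ submanifold everywhere tangent to $E^s$, and these submanifolds fit into a foliation. For $E^s\oplus\RR X$ I would take $\Wcs(x)=\bigcup_{s\in\RR}\phi_s(\Ws(x))$, the weak stable manifold of the orbit of $x$: it is an immersed surface, tangent to $E^s$ along each leaf $\phi_s(\Ws(x))$ and to $\RR X$ in the flow direction, and since one of the two combined distributions is the smooth, nowhere-vanishing flow line these tangencies fit together to give a $C^1$ integral manifold of $E^s\oplus\RR X$ through $x$.

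Finally I would prove unique integrability. If $N\ni x$ is a connected injectively immersed submanifold with $T_yN=E^s_y$ for all $y\in N$, then $Df$-invariance of $E^s$ makes each $f^n(N)$ again tangent to $E^s$, and uniform contraction of $Df$ along such submanifolds gives $\mathrm{length}(f^n\gamma)\le\lambda^n\,\mathrm{length}(\gamma)$ for curves $\gamma$ in $N$, so a small intrinsic ball $N_{loc}$ around $x$ maps, for $n$ large, into the chart defining $\Ws_{loc}(x_n)$; being a graph tangent to $E^s$ it must then agree near $x_n$ with $\Ws_{loc}(x_n)$ by uniqueness of the $\Gamma$-fixed point. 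Pulling back by $\phi_{-nt_0}$ shows $N_{loc}\subseteq\Ws(x)$, hence $N_{loc}$ is open in $\Ws(x)$; this is precisely unique integrability of $E^s$. The same scheme handles $E^s\oplus\RR X$: an integral manifold of this bundle, read transversally to the flow, is swept by local strong stable manifolds and so lies in $\Wcs(x)$. The main technical obstacle is the regularity bootstrap from Lipschitz to $C^1$ graphs with constants uniform over $M$ — making the fiber-contraction argument work for the non-autonomous sequence $(f_n)$ — which is exactly where compactness of $M$ and continuity of the invariant bundles are used; existence, uniqueness and the contraction property of the graph transform itself are comparatively routine.
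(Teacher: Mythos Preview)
Your proposal is correct and is the standard Hadamard--Perron approach, but the paper takes a much more elementary route that exploits the specific setting $\dim E^s = 1$. Rather than building the stable manifold via the graph transform and then appealing to uniqueness of its fixed point, the paper simply notes that integral curves of the continuous line field $E^s$ exist (Peano), and proves uniqueness by a direct contradiction: if two curves $\gamma_1,\gamma_2$ tangent to $E^s$ separate at a point $x$, then flowing forward by $\phi_t$ shrinks their lengths exponentially while their transverse separation (measured in $\RR X \oplus E^u$) cannot decrease, which is impossible. Integrability of $E^s\oplus\RR X$ is then obtained exactly as you do, by saturating strong stable curves under the (smooth, hence uniquely integrable) flow.

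The trade-off is clear: your argument is heavier but buys generality (arbitrary $\dim E^s$, $C^r$ regularity of leaves via the fiber-contraction bootstrap), whereas the paper's argument is a two-line sketch that needs no invariant-manifold machinery at all but is tied to the one-dimensional case. For the purposes of this paper --- a 3-manifold survey where only existence of the weak stable foliation is used --- the elementary contradiction suffices, and the regularity information you extract is never needed downstream.
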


We need to say some words to explain what we mean. Let $E \subset TM$ be a $k$-dimensional subbundle of the tangent bundle of $M$. We say that $E$ is uniquely integrable if through every point $x \in M$ there is a $k$-dimensional submanifold $S_x$ everywhere tangent to $E$ such that every curve tangent to $E$ through $x$ is completely contained in $S_x$.

The same is true for $E^u$ and $\RR X \oplus E^u$ by applying the theorem to $\phi_{-t}$. Notice that even when $\mathrm{dim}(E^s)=1$ showing its unique integrability is not obvious since these bundles are typically no better than H\"{o}lder continuous, so one needs to appeal to dynamics to get this kind of results. 

The proof of this result when $\mathrm{dim}(E^{s})=1$ is not complicated and we will now sketch it: Assume by contradiction that there are two different curves $\gamma_1$ and $\gamma_2$ everywhere tangent to $E^s$ which separate at a point $x \in M$.  Note that by considering $\phi_t$ with large $t$ we get that the curves $\gamma_1$ and $\gamma_2$ decrease their length exponentially fast, however, their transverse distance, measured along the direction $\RR X \oplus E^u$ cannot decrease (see figure \ref{fig.aflow}), which provides a contradiction. To show unique integrability of $E^s \oplus \RR X$ one just needs to flow the integral curves of $E^s$ by the flow (whose defining vector field is smooth, so uniquely integrable).

\begin{figure}[ht]
\begin{center}
\includegraphics[scale=0.55]{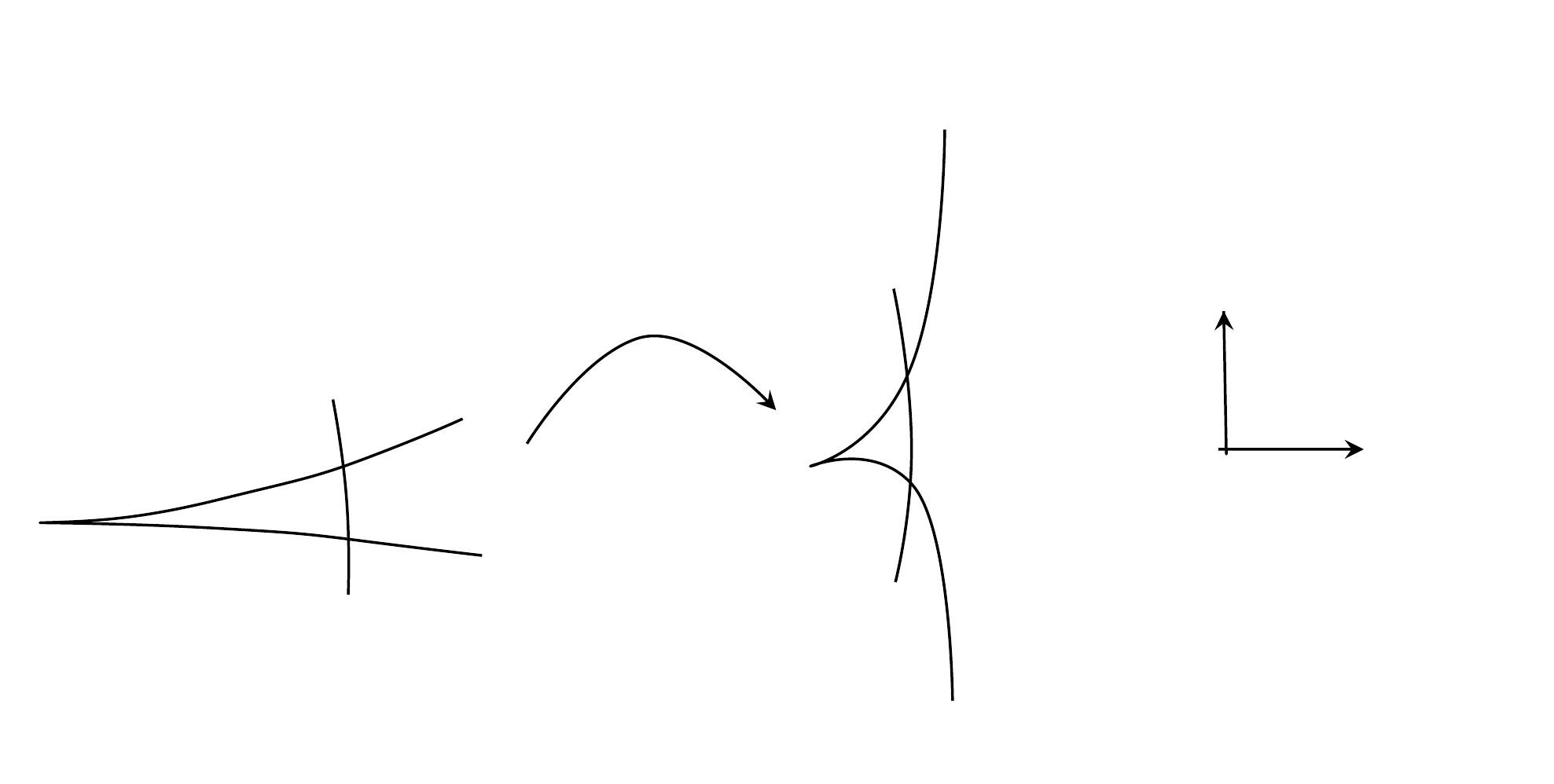}
\begin{picture}(0,0)
\put(-40,64){$E^{s}$}
\put(-85,99){$\RR X \oplus E^{u}$}
\put(-191,100){$\phi_t$}
\end{picture}
\end{center}
\vspace{-0.5cm}
\caption{{\small Flowing forward different curves tangent to $E^s$ through a given point gives a contradiction.}}\label{fig.aflow}
\end{figure}

The curves tangent to $E^s$ and the surfaces tangent to $E^s \oplus \RR X$ form what we call the \emph{strong stable} and \emph{weak stable} foliations, which together with their dual strong unstable and weak unstable foliations are one of the main tools to understand the dynamics and geometry of Anosov systems. Let us just state an easy fact about these that we will use later: 

\begin{prop}\label{prop-noncompact}
There are no closed submanifolds $N$ of $M$ tangent to $E^s \oplus \RR X$. 
\end{prop}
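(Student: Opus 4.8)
The plan is to reach a contradiction by showing that the flow strictly contracts the area of such an $N$ while preserving $N$ as a set. Throughout I use only that $N$ is tangent to $E^s\oplus\RR X$ and the defining Anosov inequality, not the integrability theorem.

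First I would note that, since $X(x)$ lies in $T_xN=E^s(x)\oplus\RR X(x)$ for every $x\in N$, the vector field $X$ is tangent to $N$. Because $N$ is closed (compact and without boundary), the integral curves of $X$ through points of $N$ stay in $N$ for all time, so $\phi_t(N)=N$ for every $t\in\RR$. Note also that $N$ is a closed surface (its dimension being $\dim E^s+1=2$ in the situation at hand), hence it carries a finite, positive Riemannian area $\mathrm{Area}(N)$ for the metric induced from $M$.

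Next I would estimate the Jacobian of the diffeomorphism $\phi_t|_N\colon N\to\phi_t(N)$. Fix $x\in N$; the tangent plane $T_xN$ is spanned by a unit vector $v^s\in E^s(x)$ and by $X(x)$. Under $D\phi_t$ the vector $v^s$ is sent to a vector of norm at most $\lambda^{t}$, for a fixed $\lambda<1$ and all large $t$, uniformly in $x$ (this follows by iterating the defining inequality $\|D\phi_{t_0}v^s\|<1$ and using compactness of $M$); meanwhile $X(x)$ is sent to $X(\phi_t(x))$, whose norm stays in a fixed interval $[c,C]\subset(0,\infty)$ since $X$ is continuous and non-vanishing on the compact manifold $M$. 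Because the angle between the lines $E^s(y)$ and $\RR X(y)$ is bounded below away from $0$ (continuity of the invariant splitting together with compactness), the parallelogram spanned by $D\phi_t v^s$ and $D\phi_t X(x)$ has area at most $K\lambda^{t}$ with $K$ independent of $x$ and $t$. Integrating this pointwise Jacobian bound over $N$ via the area formula gives $\mathrm{Area}(\phi_t(N))\le K\lambda^{t}\,\mathrm{Area}(N)$.

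Finally, combining with $\phi_t(N)=N$ yields $\mathrm{Area}(N)\le K\lambda^{t}\,\mathrm{Area}(N)$ for all large $t$, which is impossible since $0<\mathrm{Area}(N)<\infty$. The only point requiring a little care is the uniformity of the Jacobian estimate, but this is a routine consequence of compactness of $M$ and continuity of $X$ and of the invariant splitting, so no genuine obstacle arises; if $N$ happens to be only immersed rather than embedded, the identical computation is carried out in the pulled-back metric on the abstract surface. The same argument works verbatim in any dimension, replacing area by $(\dim E^s+1)$-dimensional volume.
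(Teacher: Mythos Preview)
Your argument is correct and follows exactly the line the paper sketches: $\phi_{t_0}$ restricts to a diffeomorphism of $N$ whose derivative contracts area everywhere, which is impossible on a closed surface. The paper states this in a single sentence, while you have carefully unpacked the invariance $\phi_t(N)=N$ and the uniform Jacobian bound; the only cosmetic point is that your spanning pair $(v^s,X(x))$ is not orthonormal, so strictly speaking the angle lower bound is needed to compare the image-parallelogram area to the actual Jacobian, but you have included that ingredient and the conclusion stands.
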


This is a direct consequence of the fact that the time $t_0$ map $\phi_{t_0}$ of the flow would be a diffeomorphism of the compact manifold $N$ whose derivative is everywhere contracting volume, which is just impossible.

Recall that a \emph{foliation by surfaces} of a 3-manifold $M$ is a partition of $M$ by injectively immersed $C^1$-surfaces (called \emph{leaves}) that locally look like horizontal planes $\RR^2 \times \{t\}$ sitting inside $\RR^3$  (i.e. there are charts sending leaves to horizontal planes, see figure \ref{fig.fol}). An example of a foliation by surfaces would be the weak stable foliation of an Anosov flow in a 3-manifold.

\begin{figure}[ht]
\begin{center}
\includegraphics[scale=0.35]{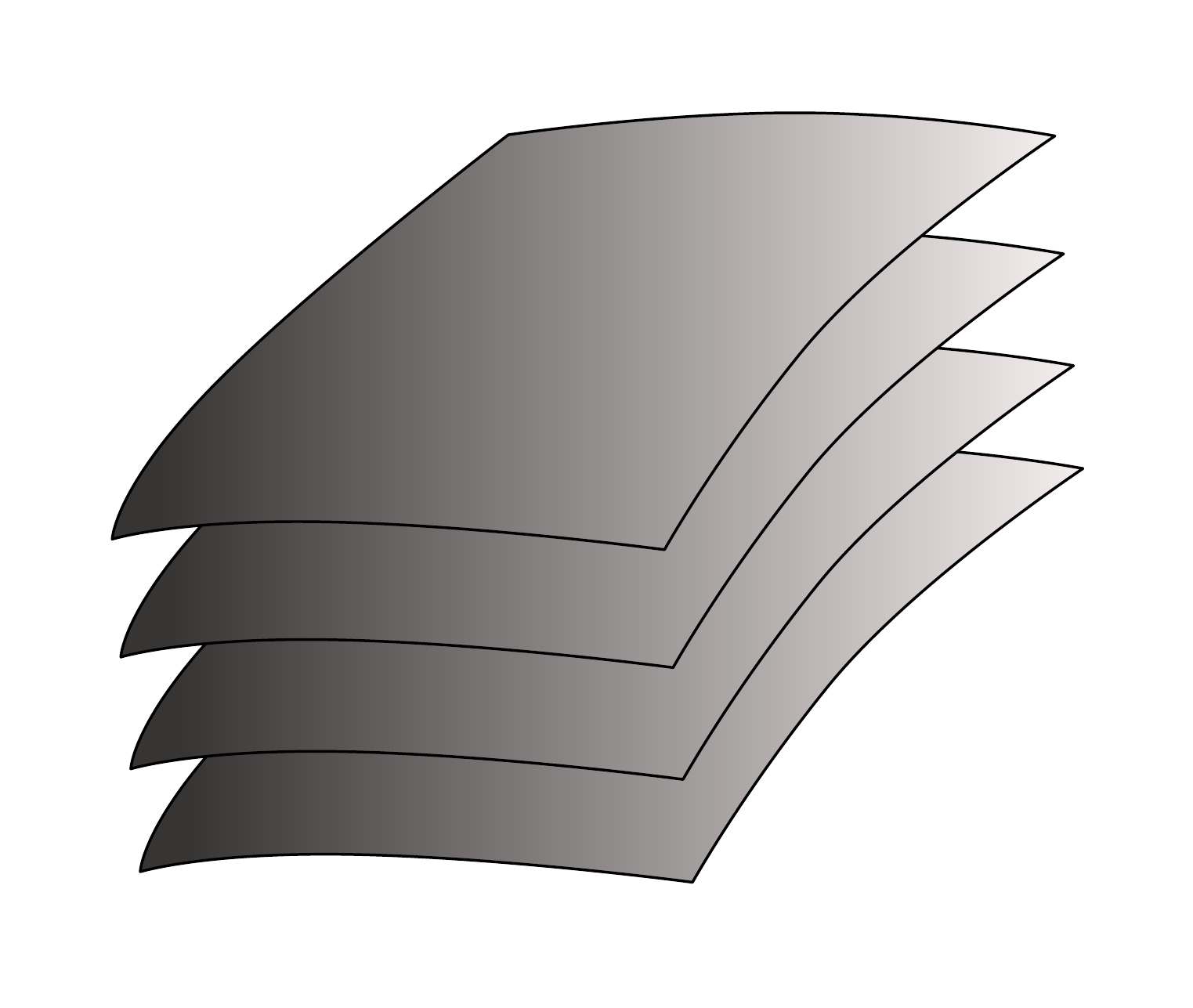}
\begin{picture}(0,0)
\end{picture}
\end{center}
\vspace{-0.5cm}
\caption{{\small A foliation locally looks as a stack of surfaces.}}\label{fig.fol}
\end{figure}

A \emph{transversal} to a foliation is an embedded circle which is everywhere transverse to the leaves of $\cF$. Notice that if $M$ is compact there are always transversals since a transverse curve intersecting a foliation box twice can be closed into a transversal. See figure \ref{fig.tran}. 

\begin{thm}[Novikov]\label{thm-Novikov} 
Let $\cF$ be a foliation by surfaces in a closed 3-manifold. Assume that there is a transversal $\gamma$ to $\cF$ which is homotopically trivial. Then $\cF$ has a closed leaf. 
\end{thm}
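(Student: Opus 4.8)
The plan is to prove the stronger statement that $\cF$ contains a \emph{Reeb component} --- a solid torus $V\subset M$ whose boundary is a leaf and whose interior is foliated by planes spiralling onto $\partial V$ --- and then to take the torus $\partial V$ as the desired closed leaf. The whole argument will go through Novikov's \emph{vanishing cycle}: a continuous family of loops $\gamma_s\colon S^1\to L_s$ ($s\in[0,1]$) lying in leaves $L_s$ of $\cF$ that vary transversally with $s$, with $\gamma_0$ constant and $\gamma_s$ essential in $L_s$ for every $s>0$. First I would use the hypothesis to build a ``singular disc with a centre'': since $\gamma$ is null-homotopic, extend it to $h\colon D^2\to M$ with $h|_{\partial D^2}=\gamma$, and after a $C^0$-small homotopy rel $\partial D^2$ put $h$ in general position, so that the pulled-back singular foliation $h^*\cF$ has finitely many tangency singularities, each of centre or saddle (Morse) type, and so that $h\pitchfork\cF$ on a collar of $\partial D^2$; there, since $\gamma\pitchfork\cF$, the induced foliation is by arcs crossing the collar transversally to $\partial D^2$. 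The line field tangent to $h^*\cF$ is then nowhere tangent to $\partial D^2$, so Poincar\'e--Hopf gives $(\#\text{centres})-(\#\text{saddles})=\chi(D^2)=1$; in particular there is a centre $p$, lying in a leaf $L_0$.

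Next I would grow circles around $p$ to extract a vanishing cycle. Near $p$ the leaves of $h^*\cF$ are circles bounding sub-discs containing $p$; let $\Delta\subset D^2$ be the maximal such disc. Each leaf-circle $\sigma\subset\Delta$ lies in one leaf $L_\sigma$ of $\cF$ and $h(\sigma)$ bounds a disc in $M$, hence is null-homotopic in $M$. If no leaf-circle of $h^*\cF$ ever had image essential in its own leaf, one could repeatedly push $h$ across discs in the leaves to remove tangency singularities until $h$ is everywhere transverse to $\cF$, contradicting $(\#\text{centres})-(\#\text{saddles})=1\neq 0$; so some leaf-circle has image essential in its leaf, and letting it degenerate back towards $p$ produces a vanishing cycle. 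A further minimization (after fixing a background metric) makes the vanishing cycle \emph{taut}, i.e.\ not shrinkable to a ``shorter'' one; this is what will later pin down its holonomy. The delicate points here are the combinatorial analysis of how the circles of $h^*\cF$ merge or split when they cross the saddles on the frontier of $\Delta$, the possibility that this frontier is a non-compact leaf rather than containing a saddle, and the minimization giving tautness.

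Then, from a taut vanishing cycle I would build the Reeb component. Collapsing $S^1\times\{0\}$, the vanishing cycle becomes a disc mapped into $M$ transversally to $\cF$ away from the collapsed point, with boundary the essential loop $\gamma_1\subset L_1$. All interior leaf-circles bound discs in their leaves, so the holonomy of $\gamma_1$ along $L_1$ on the side of this disc is trivial, while tautness forces the holonomy on the other side to be (weakly) contracting; analysing how the leaves near $L_1$ wind around $\gamma_1$ then shows that the saturation of a one-sided neighbourhood of $L_1$ is a solid torus $V$ whose boundary consists of leaves and whose interior is foliated by planes accumulating onto $\partial V$ --- a Reeb component. In particular $\partial V$ is a closed (toral) leaf of $\cF$.

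The honest assessment is that this is a hard classical theorem and that the last two steps carry essentially all of its content. I expect the two main obstacles to be: (i) upgrading ``a null-homotopic transverse disc'' to an honest \emph{taut} vanishing cycle, where the surgery/minimality argument and the bookkeeping at saddles have to be handled with care; and (ii) the holonomy analysis that manufactures the Reeb component, where tautness is essential and where --- $\cF$ being only continuous with $C^1$ leaves --- the usual $C^2$ holonomy and general-position arguments must be replaced by versions valid in this regularity.
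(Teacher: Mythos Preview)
The paper does not prove this theorem: immediately after the statement it says ``We will not prove this beautiful result which has several expositions'' and refers the reader to \cite[Chapter 4]{Calegari}. So there is nothing to compare your proposal against. The paper's only hint at the argument is in footnote~\ref{footnote}, where it singles out Haefliger's step --- putting a spanning disc in general position with respect to $\cF$ and using Poincar\'e--Hopf/Poincar\'e--Bendixson on the induced singular foliation of $D^2$ --- as the part sufficient to rule out null-homotopic transversals for Anosov flows. That is exactly your first paragraph.

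Your outline is the standard route to the full Novikov theorem (Haefliger disc $\Rightarrow$ vanishing cycle $\Rightarrow$ Reeb component $\Rightarrow$ closed torus leaf), and you have correctly identified where the real work lies: the surgery/combinatorics that promote a Haefliger configuration to a genuine (taut) vanishing cycle, and the holonomy analysis that assembles the Reeb component. Your own caveats are the right ones; in particular the low-regularity issue is real, since the paper only assumes $C^1$ leaves with continuous tangent field, and the classical treatments often state things for $C^2$ foliations. If you want to turn this into a self-contained proof you will need to either invoke a reference that handles this regularity (Solodov, or the treatment in \cite{Calegari}) or smooth the foliation first. But as a roadmap your proposal is accurate.
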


We will not prove this beautiful result which has several expositions. In fact, Novikov's result is much stronger and implies the existence of what are known as \emph{Reeb components}. One should think that in 3-manifolds compact leaves (or Reeb components) of foliations play the role that singularities play in vector fields in surfaces, and therefore Novikov's theorem acts as the Poincare-Bendixon's theorem in this setting\footnote{Even if much deeper, there is a part of the proof of Novikov's theorem (which is indeed enough to rule out homotopically trivial transversal loops for Anosov flows) that is very much modelled in the proof of Poincare-Bendixon's theorem. It is known as \emph{Haefliger's} argument: using the transverse loop, one constructs a disk whose boundary is transverse to the foliation and which is in general position; studying the induced flow on the disk is enough to find a configuration which is not compatible with Anosov flows, and such that with much more work produces a Reeb component. We note that for the partially hyperbolic case to be treated later, the full version of Novikov's theorem is important.\label{footnote}}. We refer the reader to  \cite[Chapter 4]{Calegari} for a friendly account on foliations in 3-manifolds. 

\section{Margulis/Plante-Thurston's result} 
In the late 60's Margulis showed the following beautiful result: 

\begin{thm}\label{t.main}
Let $M$ be a closed 3-dimensional manifold admitting an Anosov flow $\phi_t$, then, the fundamental group of $M$ grows exponentially. 
\end{thm}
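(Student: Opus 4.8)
The plan is to work in the universal cover $\pi\colon\tilde M\to M$ and, for every $n$, to exhibit at least $c\,\kappa^{n}$ distinct $g\in\pi_{1}(M)$ with $d_{\tilde M}(\tilde p,g\tilde p)\le Cn$, for a fixed basepoint $\tilde p$ and fixed constants $c,C>0$, $\kappa>1$; since $M$ is closed, the Milnor--\v{S}varc lemma makes this exactly the assertion that $\pi_{1}(M)$ grows exponentially. Lift the flow to $\tilde\phi_{t}$ and lift the weak stable and strong unstable foliations to $\tilde\FF^{cs}$ and $\tilde\FF^{u}$; then $\tilde\FF^{u}$ has $1$--dimensional leaves, is transverse to the codimension-one foliation $\tilde\FF^{cs}$, and --- leaves of $\FF^{u}$ being simply connected --- each compact sub-arc of a leaf of $\tilde\FF^{u}$ is embedded in $\tilde M$. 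The one topological ingredient is: by Proposition~\ref{prop-noncompact} the foliation $\FF^{cs}$ has no closed leaf, so by Novikov's theorem (Theorem~\ref{thm-Novikov}) no transversal of $\FF^{cs}$ in $M$ is homotopically trivial.

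The crux is a uniform separation property of lifted unstable leaves: there is $\eps_{0}>0$ such that any two distinct points lying on one and the same leaf of $\tilde\FF^{u}$ are at distance $\ge\eps_{0}$ in $\tilde M$. Indeed, if $x\ne y$ lie on a leaf $\tilde W$ of $\tilde\FF^{u}$ with $d_{\tilde M}(x,y)$ tiny, the arc $\alpha\subset\tilde W$ from $x$ to $y$ is transverse to $\tilde\FF^{cs}$, and capping it off suitably by a short arc produces a loop in $\tilde M$ transverse to $\tilde\FF^{cs}$; this loop is null-homotopic because $\tilde M$ is simply connected, so its projection to $M$ is (after a general-position adjustment) a homotopically trivial transversal of $\FF^{cs}$ --- contradicting Proposition~\ref{prop-noncompact} via Novikov's theorem.

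With the separation lemma in hand the count is soft. Fix $\tilde p$ and let $J$ be a unit-length segment of the strong unstable leaf through $\tilde p$. The hyperbolicity estimate (the flow analogue of \eqref{eq:PH}) provides $\kappa>1$ with $\length(\tilde\phi_{n}(J))\ge c\,\kappa^{n}$, while $\tilde\phi_{n}$ moves each point of $J$ a distance at most $n\sup_{M}\|X\|$ along its own orbit; hence $\tilde I_{n}:=\tilde\phi_{n}(J)$ is an arc of a single leaf of $\tilde\FF^{u}$, of length $\ge c\,\kappa^{n}$, contained in $B_{\tilde M}(\tilde p,Cn)$ with $C:=\sup_{M}\|X\|+1$. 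Place $\asymp\kappa^{n}$ points on $\tilde I_{n}$ at pairwise arc-distance $\ge 1$; by the separation lemma these are pairwise at $\tilde M$-distance $\ge\min\{1,\eps_{0}\}$. Projecting to $M$ and pigeonholing against a fixed finite cover of $M$ by $N$ sets of diameter $<\eps_{0}/3$, at least $c\,\kappa^{n}/N$ of them project into one set of the cover; for each such $z$, let $g_{z}\in\pi_{1}(M)$ be the deck transformation taking $z$ into a fixed ball of radius $\eps_{0}/3$. Then $g_{z}=g_{z'}$ forces $d_{\tilde M}(z,z')<\eps_{0}$ and hence $z=z'$ by the separation lemma, while $d_{\tilde M}(\tilde p,g_{z}\tilde p)\le 2Cn+\eps_{0}$ for each $z$. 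Therefore $\#\{g\in\pi_{1}(M):d_{\tilde M}(\tilde p,g\tilde p)\le 2Cn+\eps_{0}\}\ge c\,\kappa^{n}/N$, which is exponential growth.

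I expect the separation lemma to be the only real obstacle, and inside it the step of closing the unstable transverse arc into an \emph{honest} transverse loop of $\FF^{cs}$ and invoking Novikov's theorem in a legitimate way --- in particular dealing with a long arc and a possibly immersed projection, which is exactly the place where the deeper content of Novikov's theorem and Haefliger's argument is needed, as anticipated in footnote~\ref{footnote}. The remaining ingredients --- the exponential expansion of $\FF^{u}$, the at-most-linear displacement under the flow, and the pigeonhole --- are elementary.
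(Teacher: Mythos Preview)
Your argument is essentially the paper's own: Novikov's theorem together with Proposition~\ref{prop-noncompact} prevents the lifted unstable arc from re-entering a foliation box of $\widetilde{\cF^{ws}}$, so its exponentially growing length is trapped in a ball of linear radius, and exponential growth follows. The only difference is cosmetic: where the paper converts ``long arc hitting each box once'' directly into a volume lower bound for $B(\tilde\phi_t(J),1)$, you instead place $\asymp\kappa^n$ separated points on the arc and pigeonhole them into distinct deck transformations --- this is precisely the Plante--Thurston packaging the paper mentions right after the proof.

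One caveat: your separation lemma is misstated. Two distinct points on the same leaf of $\tilde\FF^{u}$ can of course be arbitrarily close in $\tilde M$ --- just take them at small arc-distance along the leaf. The statement you actually need (and actually use) is: there exist $\eps_0,L>0$ such that two points on the same lifted unstable leaf with $d_{\tilde M}(x,y)<\eps_0$ lie in a common foliation box and hence have arc-distance $<L$. Your closing-up argument proves exactly this once you notice that the short capping arc can be taken transverse to $\tilde\FF^{cs}$ only when $x$ and $y$ lie in a common box but on distinct unstable plaques; this is the content of the paper's ``an unstable arc cannot intersect the same foliation box twice'' (Figure~\ref{fig.tran}). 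With $L\le 1$ your pigeonhole step goes through unchanged.
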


A finitely generated group $\Gamma$ has \emph{exponential growth} if for some finite generating set $F \subset \Gamma$ it follows that the number of different group elements that can be written as a product of at most $n$ elements of $F \cup F^{-1}$ grows exponentially with $n$.  This is independent of the finite generating set $F$. 

In a closed manifold $M$ the fundamental group has exponential growth if and only if the volume of a ball or radius $R$ in $\widetilde M$, the universal cover of $M$, grows exponentially with respect to $R$. This means, if $\pi: \widetilde M \to M$ is the universal covering map, and we consider in $\widetilde M$ the metric induced by $\pi$, then there is a point $x \in \widetilde M$ and constants $c,\delta>0$ so that: 

\begin{equation}\label{eq:expgrowth}
\mathrm{vol}(B(x, R)) > c e^{\delta R},
\end{equation}

\noindent where $B(x,R)$ denotes the ball of center $x$ and radius $R$ in $\widetilde M$. To see the equivalence one just needs to find a compact fundamental domain in the universal cover and note that its volume must be finite, this way one can cover the ball of radius $R$ by deck transformations of bounded size and compare the growth of the volume of the ball with the growth of the fundamental group as a finitely generated group. This is the definition we will use to prove Theorem \ref{t.main}. 

It is an easy exercise to show that, up to changing the constants $c,\delta$, the definition is independent on the point $x \in \widetilde M$ as well as on the metric one pulls back from $M$, so that this is indeed a topological property of $M$ which in fact only depends on its fundamental group. Under this assumption we say that $M$ has \emph{exponential growth of fundamental group}. 

The proof by Margulis \cite{Margulis} is direct and independent of any deep result in foliation theory (even if the foliations are used crucially). Later, Plante and Thurston \cite{PlanteThurston} gave a more conceptual proof that works for general codimension one Anosov flows\footnote{i.e. those whose stable or unstable bundle is one-dimensional} and uses some deeper results in foliation theory. The proof we shall present here has ingredients from both organised in a way that will lead us naturally to the generalisation of these arguments to the classification problem of partially hyperbolic diffeomorphisms in dimension 3. 

We emphasize the following fact: In dimension 2, the hairy ball theorem, or the Poincare-Hopf index theorem imply that admitting a continuous subbundle is already enough to get some topological obstruction (i.e. only the two torus and the Klein bottle admit a continuous splitting of the tangent bundle). However, this is not the case in dimension 3; up to double cover, every closed 3-manifold has trivial tangent bundle. That is, $TM \cong M \times \RR^3$, therefore the existence of a splitting of the tangent bundle cannot be an obstruction by itself. It will be finer properties of the foliations that these bundles integrate, namely, the non-existence of compact leaves, that will come handy for this issue. 

\begin{remark}
At the time that Margulis proved this theorem, the only known examples of Anosov flows in closed 3-manifolds were the geodesic flows in negative curvature (and its finite lifts), and the suspension flows of linear hyperbolic automorphisms of tori. Later, new examples started to appear, especially in dimension 3 (see \cite{Barthelme}). The study of the geometry and topology of Anosov flows in dimension 3 has grown tremendously since these pioneering work. We refer the reader to \cite{Barthelme} for a survey of the main results with several of the key ideas.   
\end{remark}

\begin{remark}
Theorem \ref{t.main} implies the following result which also admits a more elementary proof just using Lefschetz index. If $f: \TT^2 \to \TT^2$ is an Anosov diffeomorphism\footnote{An Anosov diffeomorphism $g: M \to M$ is such that $Dg$ preserves a splitting $TM=E^{s} \oplus E^u$ so that vectors in $E^s$ are uniformly contracted and vectors in $E^u$ are uniformly expanded as in \S~\ref{s.AFlows} .} of a two-torus, then, the action of $f$ in homology is hyperbolic, meaning that it has no eigenvalue of modulus $1$. An interesting challenge could be to prove this statement after (or before!) reading the proof below. 
\end{remark}

\section{The proof}\label{s.theproof} 
We provide here a quick proof of Theorem \ref{t.main} based on the original arguments, but probably with a more modern viewpoint.  The goal is motivating tools that allow understanding the interaction between topology and dynamics. 

An easy consequence of Theorem \ref{t.main} is the non-existence of Anosov flows in the sphere $S^3$. This can also be shown quite directly by a shortcut in the same argument: Assume that $\phi_t: S^3 \to S^3$ is an Anosov flow. Consider $\cF^{ws}$ the weak stable foliation of $\phi_t$ given by Theorem \ref{thm-stablemfd}. By Novikov's compact leaf theorem (see Theorem \ref{thm-Novikov}) we know that every foliation by surfaces in $S^3$ must have a compact leaf, this contradicts Proposition \ref{prop-noncompact}. 

With these elements in hand, we are ready to give the proof. The reader not comfortable with the basics of algebraic topology can use as a model the 3-torus $\mathbb{T}^3 = S^1 \times S^1 \times S^1 = \RR^3/_{\ZZ^3}$ where integer translations are deck transformations of $\RR^3$ its universal cover.  The theorem implies that $\mathbb{T}^3$ does not admit Anosov flows, and the difficulty of proving this case is the same as the general case. Here, one will have that $\widetilde{\mathbb{T}^3}= \RR^3$ with the Euclidean metric (so balls do not have exponential growth of volume by a direct computation).

\begin{proof}[Proof of Theorem \ref{t.main}] Let $\pi: \tilde M \to M$ be the universal cover and lift $\phi_t$ to a flow $\tilde \phi_t: \widetilde M \to \widetilde M$. Let $\widetilde{\cF^{ws}}$ the lift of the weak stable foliation to $\widetilde M$. 

Consider an arc $J$ tangent to the bundle $\tilde E^u$ (the lift of $E^u$). The arc $J$ is transverse to $\widetilde{\cF^{ws}}$. Since the foliation is invariant under $\tilde \phi_t$ and the arc $J$ maps to another arc tangent to $\tilde E^u$ we deduce that the arc $\tilde \phi_t(J)$ cannot intersect the same foliation box twice, since that would allow to construct a transversal to $\cF^{ws}$ which is homotopically trivial, contradicting Theorem \ref{thm-Novikov} and Proposition \ref{prop-noncompact}. See figure \ref{fig.tran}.  

\begin{figure}[ht]
\begin{center}
\includegraphics[scale=0.35]{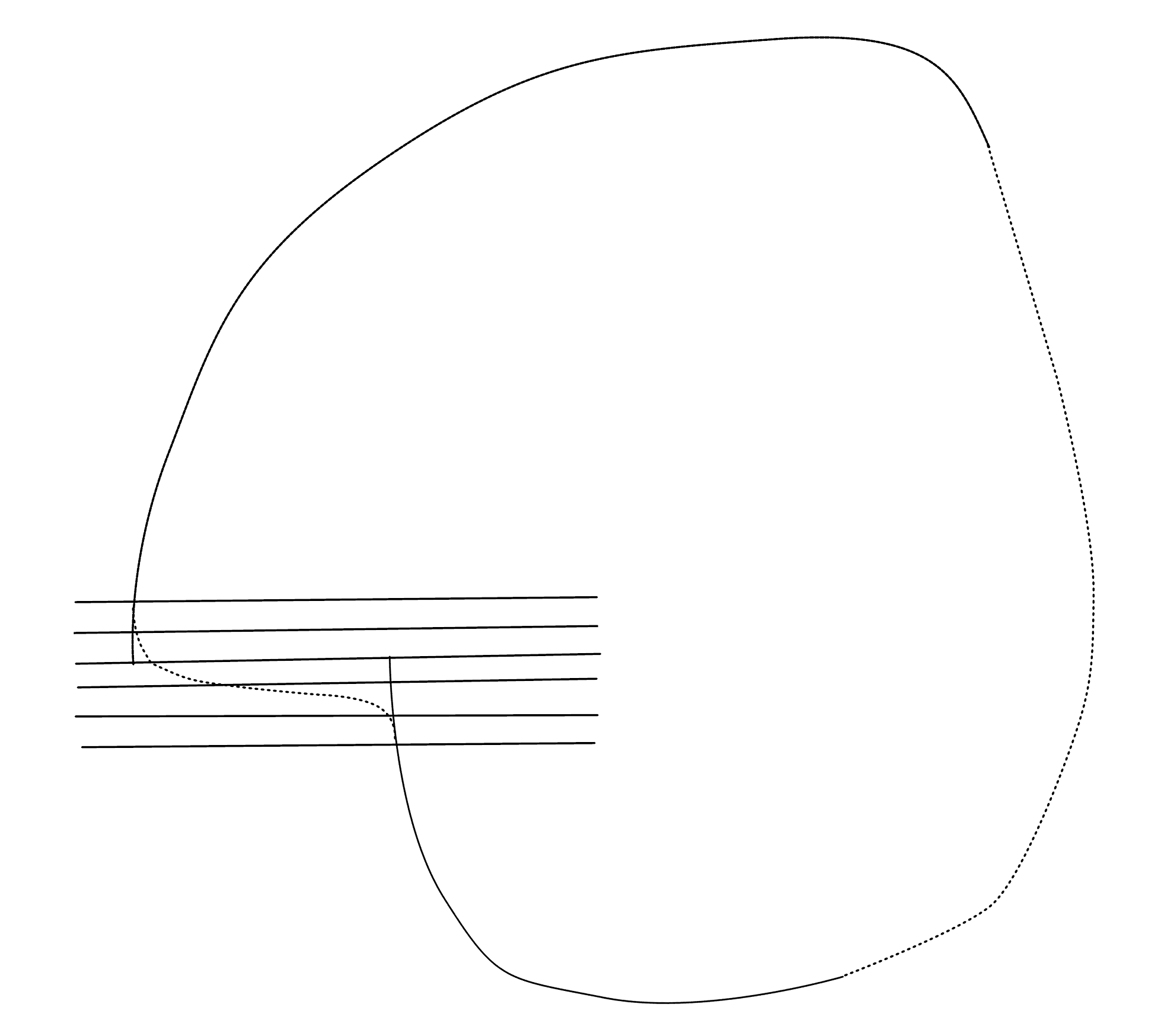}
\begin{picture}(0,0)
\end{picture}
\end{center}
\vspace{-0.5cm}
\caption{{\small If a positively transverse curve intersects a leaf twice one can construct a closed transversal.}}\label{fig.tran}
\end{figure}

Foliation boxes have uniform size since they can be pulled back from $M$ which is compact. One deduces that there exists a uniform constant $c_0>0$ so that:

$$ \mathrm{vol}(B(\tilde \phi_t(J),1)) > c_0\mathrm{length}(\tilde \phi_t(J)). $$

\noindent where $B(X,r)$ denotes the set of points in $\tilde M$ at distance less than $r$ from $X$. Moreover, since $J$ is tangent to $E^u$ there are positive constants $c_1, \delta>0$ so that $\mathrm{length}(\tilde \phi_t(J)) > c_1 e^{\delta t}$. Putting this together, one gets:

$$ \mathrm{vol}(B(\tilde \phi_t(J), 1)) > c_0 c_1 e^{\delta t}. $$

We will now show that there is a constant $c_2>0$ so that if $x_0 \in J$, then $\tilde \phi_t(J)$ is contained in $B(x_0, R_t)$ where $R_t \leq c_2 t + \mathrm{diam}(J)$. This is obtained by computing, for $x \in J$ 

\begin{equation}\label{eq:diameter} d(x_0, \tilde \phi^t (x)) \leq d(x_0, x) + d(x, \tilde \phi_t(x)) \leq \mathrm{diam}(J) + c_2 t, 
\end{equation}

\noindent where $c_2$ is a bound of the norm for the vector field generating $\phi_t$. 

This implies that $ B(\tilde \phi_t(J), 1) \subset B( x_0, R_t +1)$ and therefore, taking $\hat \delta = \frac{\delta}{c_2}$ and $c_3= e^{-\hat \delta({diam}(J))}$ and  we get 

$$ \mathrm{vol}(B(x_0, R_t + 1)) > c_0 c_1 c_3 e^{\hat \delta (R_t+ 1)} $$ 

\noindent which gives \eqref{eq:expgrowth} and completes the proof. 
\end{proof}

Margulis proof is more elementary since it does not use any deep result about foliations, however, it depends crucially on the fact that the weak stable/unstable foliation is \emph{complete} in the sense that a weak stable/unstable leaf is the union of the strong stable/unstable manifolds through points of a given orbit. This fact fails when one goes to the partially hyperbolic setting. This property is used by Margulis to construct by hand the universal cover of $M$ and compute its volume growth. 

 The proof of Plante and Thurston is much more similar to the one we present here, only that instead of computing volume they construct many loops that they show to be pairwise nonhomotopic. For this, they use Haefliger's argument (cf. footnote~\ref{footnote}). In particular, as the proof presented here and in contrast with Margulis proof it only needs one of the two foliations and that is why it extends to codimension one Anosov flows. But the importance here is that this line of reasoning does not depend on understanding the internal structure of the codimension one foliation, and so is well suited to be extended in other contexts. 

\section{Classification of partially hyperbolic systems}
We will now come to the problem of understanding the structure of general partially hyperbolic systems in 3-dimensional manifolds by modelling the questions and ideas in the work done in the previous section. 
%
%
%
%
Here we shall concentrate on the following questions of current research interest which can be considered as continuations of the problem discussed above for Anosov flows: 

\begin{quest}
Which $3$-manifolds admit partially hyperbolic diffeomorphisms? Which isotopy classes? Are these similar in some way to the known examples? 
\end{quest}

We refer the reader to \cite{CP} for a general expositions on the basic facts about partially hyperbolic systems as well as a long list of examples. Here we will concentrate in a few relevant aspects specific to 3-dimensions. 

A main difference which makes studying partially hyperbolic diffeomorphisms much harder than Anosov flows is that even if the strong bundles $E^s$ and $E^u$ still integrate uniquely into $f$-invariant foliations (essentially by the same argument as in the Anosov flow case), this is no longer true for the  \emph{center stable} and \emph{center unstable} bundles $E^{cs} = E^s \oplus E^c$ nor $E^{cu}=E^c \oplus E^u$.  This makes the study of partially hyperbolic diffeomorphisms much harder. At the beginning of its exploration, the topological study of these systems assumed the existence of such foliations under the concept of \emph{dynamical coherence} since all the known examples had them. We say that a partially hyperbolic diffeomorphism is \emph{dynamically coherent} if there are $f$-invariant foliations tangent respectively to $E^{cs}$ and $E^{cu}$. 

A recent breakthrough result by Burago and Ivanov \cite{BI} provided a tool for avoiding such an undesirable hypothesis\footnote{The reason it is undesirable is that it is not easy to check, and that several examples have appeared where it is known not to hold.}. 

\begin{thm}\label{t.BI}
Up to finite cover, there is a Reebless foliation $\cF$ transverse to the unstable direction $E^{u}$.  
\end{thm}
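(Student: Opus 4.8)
The plan is to build the transverse foliation $\cF$ by first producing a suitable $2$-plane field $\cD$ transverse to $E^u$ and then showing that it can be made integrable and Reebless. The natural candidate for $\cD$ is a small perturbation of $E^{cs} = E^s \oplus E^c$: since $E^s$ integrates to the strong stable foliation $\Ws$ and $\cD$ must contain a direction close to $E^s$, the strategy is to find a $1$-dimensional sub-bundle $\cE \subset \cD$ transverse to $\Ws$ inside each would-be leaf, so that $\cD$ is "stretched" along the stable direction. The key dynamical input is that iterating by $f^{-1}$ expands the stable direction and contracts the unstable one, so that any plane field transverse to $E^u$, when pulled back by $f^{-n}$, becomes more and more tangent to $E^s \oplus E^c$; this is the mechanism that will let us upgrade transversality-to-$E^u$ into the existence of an honest foliation, because it forces the holonomy to behave.

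First I would set up a reference plane field $\cD_0$ transverse to $E^u$ — for instance a smooth approximation of $E^{cs}$ — and consider its forward iterates $Df^n \cD_0$. Transversality to $E^u$ is an $f$-invariant open condition (it is preserved because $E^u$ is $Df$-invariant and the cone around $E^u$ is forward-invariant), so all iterates stay transverse to $E^u$; the point is to extract a limit object in the Grassmannian bundle that is both $Df$-invariant-ish and integrable. Here is where the Burago--Ivanov averaging idea enters: rather than a pointwise limit, one averages the pulled-back plane fields (or uses a graph-transform/cone-field argument) to produce a continuous plane field $\cD$ transverse to $E^u$ and tangent to a genuine foliation. The "up to finite cover" hypothesis is needed precisely to orient the bundles $E^s, E^c, E^u$ (and hence make the plane field and the normal direction $E^u$ co-orientable), which is what allows the averaging/limiting construction to be carried out globally rather than just locally.

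Once $\cF$ is constructed and transverse to $E^u$, Reeblessness follows the template of Section~\ref{s.theproof}: a Reeb component would give a closed leaf which is a torus, and more importantly a vanishing-cycle / homotopically trivial transversal. But $E^u$ integrates to the strong unstable foliation $\Wu$, whose leaves are transverse to $\cF$; an arc of $\Wu$ is a transversal, and the same argument as in the proof of Theorem~\ref{t.main} — flowing/iterating the $\Wu$-arc forward under $f$ makes it long, and a homotopically trivial transversal would let us close it up into a loop bounding a disk, which via Novikov's theorem (Theorem~\ref{thm-Novikov}, the full version, cf. footnote~\ref{footnote}) produces a Reeb component whose structure is incompatible with the $\Wu$-holonomy being expanding — rules this out. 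Concretely, inside a Reeb component the unstable holonomy would have to both expand uniformly and return, which is impossible on a compact leaf by the volume argument of Proposition~\ref{prop-noncompact}.

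The main obstacle is the first step: constructing a plane field transverse to $E^u$ that is actually \emph{integrable}. Transversality to $E^u$ is cheap and robust, but integrability is a closed, codimension-infinity condition, and the bundle $E^{cs}$ itself need not integrate (this is exactly the failure of dynamical coherence emphasized above). The Burago--Ivanov insight — which I expect to be the crux — is that one does not need the invariant bundle to integrate; one builds a \emph{different}, merely continuous, plane field by a surgery/averaging procedure that sacrifices $Df$-invariance and even $C^1$-smoothness of the leaves, keeping only transversality to $E^u$ and tangency to a topological foliation. Making that construction precise (controlling the holonomy, ensuring the leaves are $C^1$-immersed surfaces forming a genuine foliation, and handling the orientability obstructions via the finite cover) is the technical heart of the matter; everything after that is a repackaging of the Anosov-flow argument with $\Wu$ playing the role of the unstable arc $J$ and Novikov's theorem supplying the contradiction.
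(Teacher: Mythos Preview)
The paper does not give a proof of Theorem~\ref{t.BI}; it cites Burago--Ivanov and only sketches the Reeblessness step. So there is no full ``paper's proof'' to compare against, but your proposal can be measured against the paper's remarks and the actual Burago--Ivanov argument, and on both counts there are genuine gaps.

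For Reeblessness, your argument breaks down because you invoke the volume contraction of Proposition~\ref{prop-noncompact} on a torus leaf of $\cF$. That proposition needs the dynamics to preserve the leaf, but $\cF$ is \emph{not} $f$-invariant --- that is exactly the failure of dynamical coherence the paper emphasises --- so $f$ need not send the torus to itself and the contradiction evaporates. Your detour through Novikov is also circular as written: you assume a Reeb component, then use Novikov to produce one. The paper's route is both simpler and correct: any one-dimensional foliation transverse to a Reeb component must have a closed leaf (the transverse line field gets trapped in the solid torus and a Brouwer-type argument forces a periodic orbit), while $E^u$ can have no closed integral curve since backward iteration would shrink its length to zero. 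That is the whole argument.

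For the construction of $\cF$, the mechanism you propose --- averaging or taking Grassmannian limits of iterates $Df^n\cD_0$ --- does not produce integrability: limits of integrable plane fields in $C^0$ need not be integrable, and ``averaging'' plane fields is not even well-defined as a plane field. The actual Burago--Ivanov construction is of a quite different nature and, as the paper stresses, uses $\dim E^c=\dim E^u=1$ in an essential way. Roughly, they work \emph{locally}: using the orientations (whence the finite cover, which you identified correctly) and one-dimensionality, they patch local center-stable surfaces into a \emph{branching foliation} tangent to $E^{cs}$ whose leaves may merge but never topologically cross, and then show that such a branching foliation can be opened up into a genuine foliation still transverse to $E^u$. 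The paper's comment that this ``allows them to treat the problem locally and obtain this global information'' is pointing at exactly this local-to-global mechanism, which an averaging argument does not supply.
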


This implies by iterating backwards that one can choose the foliation to be as close to tangent to $E^{cs}$ as desired, but does not imply dynamical coherence, as in the limit the leaves could merge together forming what is called a \emph{branching foliation} which is an incredibly useful tool for the study of partially hyperbolic diffeomorphisms but that we will avoid to discuss here. We note here that the proof of Theorem \ref{t.BI} depends very strongly on the fact that $E^c$ and $E^u$ are one dimensional; indeed, one can not expect a similar result if $E^c$ has higher dimensions. 

In fact, to show the result it is enough to show that there exists a foliation transverse to $E^u$ since the non-existence of Reeb components follows from the fact that there are no closed curves tangent to $E^u$. This just follows from the fact that a flow transverse to a Reeb component must have a closed orbit. This beautiful observation from \cite{BI} allows them to treat the problem locally and obtain this global information. 

Theorem \ref{t.BI} has the following consequence which is the first known topological obstruction for the existence of partially hyperbolic diffeomorphisms: 

\begin{cor}[Burago-Ivanov] The sphere $\mathbb{S}^3$ does not admit partially hyperbolic diffeomorphisms. 
\end{cor}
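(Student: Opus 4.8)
The plan is to run the same argument as in the proof of Theorem \ref{t.main}, but replacing the weak stable foliation of an Anosov flow by the foliation $\cF$ provided by Theorem \ref{t.BI} and the flow direction by the strong unstable foliation of the partially hyperbolic diffeomorphism. First I would pass to a finite cover, which is harmless: if a finite cover of $\mathbb{S}^3$ admitted a partially hyperbolic diffeomorphism we would reach a contradiction there, but in fact $\mathbb{S}^3$ is its own universal cover and simply connected, so the ``up to finite cover'' clause is vacuous here and $\cF$ lives directly on $\mathbb{S}^3$. Suppose then for contradiction that $f:\mathbb{S}^3\to\mathbb{S}^3$ is partially hyperbolic. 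By Theorem \ref{t.BI} there is a Reebless foliation $\cF$ by surfaces transverse to $E^u$.

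The key point is that $\mathbb{S}^3$ is simply connected, so \emph{every} loop in $\mathbb{S}^3$ is homotopically trivial; in particular any transversal to $\cF$ is homotopically trivial. By Novikov's theorem (Theorem \ref{thm-Novikov}), $\cF$ must then have a closed leaf $N$, which is a closed surface in $\mathbb{S}^3$ tangent to a plane field transverse to $E^u$ at every point. Now I would derive a contradiction exactly as in Proposition \ref{prop-noncompact}: the strong unstable foliation $\Wu$ of $f$ integrates $E^u$ (this integrability holds by the same argument as in the Anosov case, as recalled in the excerpt), and its leaves are everywhere transverse to $N$. So through each point of $N$ there is an arc of $\Wu$ leaving $N$ transversally; pushing this by forward iterates of $f$ expands its length, and the usual foliation-box/volume estimate shows the leaves of $\Wu$ are noncompact and ``escape'' — but more simply, a closed surface $N$ transverse to a one-dimensional foliation in $\mathbb{S}^3$ forces, via the index/Poincaré–Hopf type obstruction, that $N$ have zero Euler characteristic, i.e. $N$ is a torus or Klein bottle; and a torus in $\mathbb{S}^3$ is incompressible-free, which together with the transverse-foliation structure leads to a contradiction with the Reeblessness of $\cF$ (a closed leaf of a foliation transverse to $E^u$ would give a Reeb-type obstruction since $E^u$ has no closed curves tangent to it, as noted in the excerpt after Theorem \ref{t.BI}).

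Concretely, the cleanest route: a closed leaf $N$ of $\cF$ is transverse to $\Wu$, hence $\Wu$ restricts to a nonsingular one-dimensional foliation (a flow direction) on the closed surface $N$, forcing $\chi(N)=0$; so $N$ is a $2$-torus. But $N$ is then a closed surface tangent to a Reebless foliation of $\mathbb{S}^3$, and by Novikov's theorem a Reebless foliation of $\mathbb{S}^3$ cannot exist at all — indeed the existence of a homotopically trivial transversal already produces a Reeb component, contradicting that $\cF$ is Reebless. Thus the assumption that $f$ exists is untenable, and $\mathbb{S}^3$ admits no partially hyperbolic diffeomorphism.

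I expect the main subtlety to be bookkeeping about \emph{which} transversal one uses and why it is homotopically trivial: one must produce an honest transversal (a closed embedded curve transverse to $\cF$) rather than just a transverse arc, which is done by the standard trick recalled in the excerpt (a transverse arc meeting a foliation box twice can be closed up), and then invoke simple connectivity of $\mathbb{S}^3$ to conclude triviality. Everything else is a direct citation: Theorem \ref{t.BI} for the Reebless transverse foliation, Novikov's Theorem \ref{thm-Novikov} for the closed leaf (or directly for the contradiction with Reeblessness), and the observation that there are no closed curves tangent to $E^u$ to see that no Reeb component — equivalently no closed leaf situation compatible with transversality to $E^u$ — can occur. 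The argument is really the Anosov-flow proof with the Burago–Ivanov foliation substituted in, and the simple connectivity of $\mathbb{S}^3$ supplying the homotopically trivial loop for free.
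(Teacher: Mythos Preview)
Your last substantive sentence is the whole proof and matches the paper's intended argument exactly: Theorem~\ref{t.BI} produces a Reebless foliation on $\mathbb{S}^3$, while the full Novikov theorem (which, as the paper remarks after Theorem~\ref{thm-Novikov} and in footnote~\ref{footnote}, yields a Reeb component, not merely a closed leaf) says every surface foliation of $\mathbb{S}^3$ has a Reeb component. That is the contradiction, and nothing more is needed.

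The long detour before that, however, contains genuine errors that you should delete rather than polish. First, you cannot mimic Proposition~\ref{prop-noncompact}: that argument used that the weak stable foliation is $\phi_t$-invariant, so the flow restricts to a volume-contracting diffeomorphism of the closed leaf. The foliation $\cF$ from Theorem~\ref{t.BI} is \emph{not} $f$-invariant, so $f$ need not send a closed leaf $N$ to itself and no such contraction argument is available. Second, your Euler characteristic step is backwards: $\cW^u$ is \emph{transverse} to $N$, so it does not restrict to a nonsingular line field on $N$; it only trivializes the normal bundle of $N$, which imposes no constraint on $\chi(N)$. (A sphere can perfectly well sit transverse to a nonsingular line field in an ambient $3$-manifold.) So the passage ``$\chi(N)=0$, hence $N$ is a torus'' is unjustified.

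In short: keep only the final line (simply connected $\Rightarrow$ any transversal is null-homotopic $\Rightarrow$ full Novikov gives a Reeb component $\Rightarrow$ contradicts Reeblessness from Theorem~\ref{t.BI}) and discard the rest. This is precisely why the paper flags that the \emph{full} version of Novikov's theorem is what matters in the partially hyperbolic setting: the weaker ``closed leaf'' conclusion of Theorem~\ref{thm-Novikov} as stated is not by itself a contradiction here.
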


The proof of Theorem \ref{t.main} in \S \ref{s.theproof} has as a moral that to expand a one-dimensional foliation transverse to a two-dimensional foliation in a 3-manifold one needs space. This moral quite extends to the diffeomorphism case, only that diffeomorphisms can wrap the manifold onto themselves and then obtain expansion without much space. 

For instance, a matrix in $\mathrm{SL}(3,\mathbb{Z})$ with real eigenvalues and at least one larger than $1$ induces a partially hyperbolic diffeomorphism on $\mathbb{T}^3$.  The volume growth of the universal cover $\mathbb{R}^3$ of $\mathbb{T}^3$ is just polynomial. The reason is that the action of $f$ itself already gives the foliation space to expand. In the proof of Theorem \ref{t.main} this appears in the crucial use of the fact that $\phi_t$ is a flow (or equivalently, that its time one map is homotopic to the identity) which gives equation \eqref{eq:diameter}. 

With essentially the same proof as for Theorem \ref{t.main} by replacing the stable manifold theorem with Theorem \ref{t.BI} one can obtain the following result which provides obstructions for the mapping classes which admit partially hyperbolic diffeomorphisms:

\begin{thm}\label{t.obstructionph}
If $f: M \to M$ is a partially hyperbolic diffeomorphism of a closed 3-dimensional manifold and $\hat M_f$ is the mapping torus of $f$, then the fundamental group $\pi_1(\hat M_f)$ of $\hat M_f$ has exponential growth. 
\end{thm}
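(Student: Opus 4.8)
The plan is to mimic the proof of Theorem~\ref{t.main} verbatim, working in the mapping torus $\hat M_f = M \times [0,1] / (x,1)\sim (f(x),0)$ rather than in $M$ itself. The suspension construction turns the diffeomorphism $f$ into a flow: let $\psi_s : \hat M_f \to \hat M_f$ be the suspension flow, whose time-one return map to the fibre $M \times \{0\}$ is exactly $f$. The partially hyperbolic splitting $E^s \oplus E^c \oplus E^u$ of $TM$ then gives rise to a splitting $E^u \oplus (E^c \oplus \RR Y) \oplus E^s$ of $T\hat M_f$ (where $Y$ generates $\psi_s$), with $E^u$ still expanded exponentially under the flow. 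So the one-dimensional bundle $\hat E^u$ on $\hat M_f$ plays the role that $\tilde E^u$ played before, and the key point is that we now genuinely have a flow, so equation~\eqref{eq:diameter} is available: displacement along flow lines is linear in time.

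The steps, in order: (1) Form $\hat M_f$ and the suspension flow $\psi_s$; record that $E^u$ lifts to a one-dimensional $\psi_s$-invariant bundle whose arcs grow exponentially in length. (2) Invoke Theorem~\ref{t.BI} to get — after passing to a finite cover, which does not affect whether $\pi_1$ has exponential growth — a Reebless foliation $\cF$ of $M$ transverse to $E^u$; note that this is preserved in a suitable sense under forward iteration so that, after suspending, one obtains on $\hat M_f$ a codimension-one foliation (a foliation by surfaces once one checks transversality to $\psi_s$ is not needed — what is needed is transversality to $\hat E^u$) that is Reebless. Here one must be a little careful: strictly, one wants to apply Novikov on $\hat M_f$, so one should produce an honest Reebless foliation by surfaces of $\hat M_f$ transverse to $\hat E^u$, which can be done by suspending $\cF$ itself (the mapping torus of a foliation transverse to the return direction). (3) Pass to the universal cover $\pi : \widetilde{\hat M_f} \to \hat M_f$, lift the flow and the foliation, take an arc $J$ tangent to $\hat E^u$; as in \S\ref{s.theproof}, if $\psi_s(J)$ re-entered a foliation box it would close up to a homotopically trivial transversal, contradicting Novikov (Theorem~\ref{thm-Novikov}) together with the absence of compact leaves (the analogue of Proposition~\ref{prop-noncompact}, which here follows from the nonexistence of closed curves tangent to $E^u$, as noted after Theorem~\ref{t.BI}). (4) Conclude, exactly as before, that $\mathrm{vol}(B(\psi_s(J),1)) > c_0 c_1 e^{\delta s}$ while $\psi_s(J) \subset B(x_0, c_2 s + \mathrm{diam}(J))$ by \eqref{eq:diameter} applied to $\psi_s$; hence balls of radius $R$ in $\widetilde{\hat M_f}$ have volume $\gtrsim e^{\hat\delta R}$, which is exactly \eqref{eq:expgrowth} for $\hat M_f$.

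The main obstacle — really the only place requiring genuine care beyond bookkeeping — is step (2): producing, on the mapping torus, a genuinely Reebless foliation by surfaces transverse to $\hat E^u$, and checking that the finite cover supplied by Theorem~\ref{t.BI} can be arranged compatibly with the suspension (i.e. that one may take a finite cover of $\hat M_f$ of the form $\hat M_{f^k}$ or a fibrewise cover, so that passing to it does not destroy the flow structure needed for \eqref{eq:diameter}). Once the correct Reebless foliation on $\hat M_f$ transverse to $\hat E^u$ is in hand, the rest is the argument of \S\ref{s.theproof} word for word, since the suspension flow restores precisely the feature — linear-in-time displacement along the extra direction — whose failure for a general diffeomorphism is what obstructs a direct argument on $M$.
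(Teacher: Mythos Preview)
Your overall architecture is exactly what the paper has in mind: pass to the suspension so that the diffeomorphism becomes a flow, thereby recovering the linear displacement estimate \eqref{eq:diameter}, and then rerun the argument of \S\ref{s.theproof} with the Burago--Ivanov foliation in place of the weak stable foliation. That part is fine, and steps (1), (3), (4) go through once (2) is in order.

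The gap is precisely where you flag it, but your proposed resolution of step~(2) does not work as written, for two reasons. First, $\hat M_f$ is a closed \emph{four}-manifold, so a codimension-one foliation there has three-dimensional leaves, not surfaces, and Theorem~\ref{thm-Novikov} as stated (or its usual strengthenings) is a genuinely three-dimensional result; you cannot invoke it on $\hat M_f$. Second, and more seriously, the Reebless foliation $\cF$ produced by Theorem~\ref{t.BI} is \emph{not} $f$-invariant (only the branching object is), so there is no way to ``suspend $\cF$'' to an honest foliation of $\hat M_f$; the phrase ``preserved in a suitable sense under forward iteration'' hides exactly the difficulty.

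The repair is simpler than what you attempt: do not build any foliation on $\hat M_f$ at all. Apply Novikov on $M$ itself. Since $E^u$ is $f$-invariant, the arc $f^n(J)\subset M$ is still tangent to $E^u$, hence transverse to $\cF$; Theorem~\ref{thm-Novikov} (on the three-manifold $M$, with $\cF$ Reebless) forbids its lift to $\tilde M$ from meeting any foliation box of $\tilde\cF$ twice, giving $\mathrm{vol}_{\tilde M}\big(B(\tilde f^n(\tilde J),1)\big)\geq c_0\,\mathrm{length}(\tilde f^n(\tilde J))\geq c_0c_1 e^{\delta n}$. Now observe that each fibre of $\widetilde{\hat M_f}\to\RR$ is a copy of $\tilde M$, and the fibre over $n$ is isometric to $(\tilde M,\tilde g)$ via the deck transformation involving $\tilde f^n$; under this identification the flowed arc $\tilde\psi_n(\tilde J)$ corresponds exactly to $\tilde f^n(\tilde J)$. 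So the three-dimensional volume estimate transfers to the fibre, and thickening by a unit interval in the flow direction gives the four-dimensional estimate you need. The finite-cover issue is harmless for the same reason you note: exponential growth is insensitive to finite covers.
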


Recall that the \emph{mapping torus} of a map $F : X \to X$ is the space $X \times [0,1]/_{\sim}$ where one identifies $(x,1) \sim (F(x),0)$ for all $x$. It depends only on the homotopy class of the map $F$, and produces a smooth manifold if $X$ is a manifold and $F$ a diffeomorphism (so that the equivalence with volume growth still holds). 

But Theorem \ref{t.BI} is indeed stronger, since it can also provide further obstructions thanks to the well developed theory of Reebless foliations. There are manifolds with exponential growth of fundamental group known not to admit foliations without compact leaves, including some hyperbolic 3-manifolds (see e.g. \cite[Example 4.4.6]{Calegari}).  These provide also obstructions to the existence of partially hyperbolic diffeomorphisms. Up to recently, these were more or less all the known obstructions to the existence of partially hyperbolic diffeomorphisms. At the moment of this writing, we do not know any manifold with exponential growth of fundamental group which admits a partially hyperbolic diffeomorphism but does not admit an Anosov flow. But lots of developments have been made recently that give hope that the understanding of partially hyperbolic diffeomorphisms is not far from the understanding of Anosov flows. 

\section{Further discussion} 
As mentioned, the obstruction given by Theorem \ref{t.obstructionph} is not sharp, so it makes sense to see to what extent one can characterise the homotopy classes of diffeomorphisms of $3$-manifolds admitting partially hyperbolic diffeomorphisms. It turns out that only very recently examples in new isotopy classes where found \cite{BGHP}. In these examples, new features of partially hyperbolic systems were exposed, in particular, the global nature of dynamical coherence is now better understood. 

But somehow, all examples we know build in some way or the other on some Anosov system. The examples in \cite{BGHP} are constructed by using the cone-field criterium to guarantee partial hyperbolicity together with a careful understanding of the global structure of the invariant bundles. This way, it is possible to construct diffeomorphisms of the manifold which respect transversalities between the bundles, and this allows to create new partially hyperbolic diffeomorphisms in new isotopy classes. These kinds of constructions are still in their infancy, and it is likely that new examples can be created using these ideas. Nonetheless, there are some manifolds and isotopy classes of diffeomorphisms where the partially hyperbolic dynamics seem amenable to classification, notably hyperbolic and Seifert 3-manifolds \cite{BFFP, BFP}. A notion of \emph{collapsed Anosov flow} has been proposed recently that may account for all new examples, and which needs to be tested against new potential constructions \cite{BFP}. 

In higher dimensions, Anosov systems are far from being classified, and new ways to construct partially hyperbolic examples have been devised \cite{GHO}, which depend to some extent on Anosov systems, but seem likely to be more flexible and maybe combinable with the techniques in \cite{BGHP}. Even the most basic questions in high dimensions remain quite open. 

We refer the reader to  \cite{BDV} for a general overview of smooth dynamics and to \cite{Wilkinson} for a recent account on partial hyperbolicity. In \cite{CHHU} the reader can find a survey on the dynamics of partially hyperbolic diffeomorphisms specialized to dimension 3 which also touches upon the classification problem. 

If the reader wishes to know more about the classification problem of partially hyperbolic diffeomorphisms in dimension 3, then the following references could be a useful introduction \cite{CHHU, HP, PotrieICM, BFFP, BFP}. 
%
%
%
%
%




\end{document}